\documentclass{article}

\usepackage[all,arc,tile]{xy}

\usepackage{amssymb,amsfonts,amsmath,amsthm}
\usepackage{enumerate}
\usepackage{mathrsfs}
\usepackage{url}
\usepackage{tikz}

\usepackage{geometry}
 \geometry{
 a4paper,
 total={150mm,257mm},
 left=30mm,
 top=20mm,
 }

\usepackage{titlesec}

%% Insert here your own symbols, as the following ones:

%blackboard shortcuts

    %% complex numbers

%\newcommand{\ggg}{\mathbb G}

%\newcommand{\lll}{\mathbb L}

\newcommand{\nnn}{\mathbb N}  %% natural numbers

 %% rational numbers
\newcommand{\rrr}{\mathbb R}    %% real numbers

    %% integers

  %% sim relattions 

%\newcommand{\sect}[1]{\section*{#1}\addcontentsline{toc}{section}{#1}}
%\newcommand{\sub}[1]{\subsection*{#1}}

\newcommand{\del}{\partial} % del instead of partial
\newcommand{\id}{\begin{tikzpicture}
\draw (0.05, 0) --(0.05, 0.21);
\draw (0.09, 0) -- (0.09, .25);
\draw (0, 0) -- (0.15, 0);
\draw (0.09, .25) -- (0.02, .18);

\end{tikzpicture}  } %blackboard 1

%--------Theorem Environments--------
%theoremstyle{plain} --- default
\newtheorem{thm}{Theorem}[section]
\newtheorem{cor}[thm]{Corollary}
\newtheorem{prop}[thm]{Proposition}
\newtheorem{lem}[thm]{Lemma}
\newtheorem{defn}[thm]{Definition}

\newtheorem{notn}[thm]{Notation}

\newtheorem{rem}[thm]{Remark}

\title{Sliced Wasserstein Geodesics and Equivalence Wasserstein and Sliced Wasserstein metrics}

\author{John Seale Hopper\footnote{email: jshopper@math.ucla.edu\\ University of California, Los Angeles, 520 Portola Plaza, Los Angeles, CA 90095}}

%\keywords{Sliced Wasserstein Distance, Geodesics in Probability Spaces, Optimal Transport, Equivalence of Metrics}

\begin{document}

% Use the \maketitle command after the abstract
\maketitle

% Example of section

\begin{abstract} 
 This paper will introduce a family of sliced Wasserstein geodesics which are not standard Wasserstein geodesics, objects yet to be discovered in the literature. These objects exhibit how the geometric structure of the Sliced Wasserstein space differs from the Wasserstein space, and provides a simple example of how solving the barycenter and gradient flow problems change when moving between these metrics. Some of these geodesics will only be Hölder continuous with respect to the Wasserstein metric and thus will provide a direct proof that Sliced-Wasserstein and regular Wasserstein metrics are not equivalent. Previous proofs of this were done for various cases in \cite{guo} and \cite{jun}. This paper, not only provides a direct proof, but also fills in gaps showing these metrics not equivalent in dimensions greater than 2.
\end{abstract}

\section{Introduction and Definitions}

It is a known fact that the space of probability measures with finite $p$th moments (notated $\mathcal{P}_p$) when equipped with so called sliced Wasserstein metrics are not length spaces, see \cite{park} and for more general cases see \cite{jun}. Even so, it has not been well explored when there are geodesics in this spaces and what do they look like. This paper will introduce the first non-trivial examples of sliced Wasserstein geodesics and hopefully help reveal their nature and to what degree they differ from Wasserstein geodesics. This will have implications for when Wasserstein and sliced Wasserstein geodesics are equivalent. Many of the cases have been proven in these previous papers (\cite{guo} and \cite{jun}), and the proof presented here fills those gaps and is novel in its method. As such, this paper will construct the first examples of non-trivial sliced Wasserstein geodesics (sections \ref{1dgeo} and \ref{swgeo}) and it will present a short and novel proof of non-equivalence of these metrics in dimensions greater than 2 (section \ref{bilip}). 

We will recall the following definitions of sliced Wasserstein and generalized Monge-Kantorovich metrics as introduced in \cite{jun}. These are related to the Radon transform, we will let $R^\theta(x) := x\cdot \theta$ and, let $f_\sharp\mu$ denote the push-forward of a measure by the map $f$. 
\begin{defn}
For $1\leq p\leq +\infty$, and $1\leq q\leq +\infty$ and $\mu, \nu\in \mathcal{P}_p(\rrr^d)$ we define the the $p$, $q$ sliced Wasserstein metric to be
$$SW_{p, q}(\mu, \nu): = \left[\frac{1}{\mathcal{H}^{d-1}(\mathbb{S}^{d-1})}\int_{\mathbb{S}^{d-1}} (W_p(R_\sharp^\theta \mu, R_\sharp^\theta \nu))^qd\mathcal{H}^{d-1}(\theta) \right ]^{1/q}.$$
Where when $q=+\infty$ this is understood as $\sup_{\theta\in \mathbb{S}^{d-1}} W_p(R_\sharp^\theta \mu, R_\sharp^\theta \nu)$. Here $W_p$ is the $p$th Wasserstein distance on $\mathcal{P}_p(\rrr)$.
\end{defn}

\section{One Dimensional Geodesics} \label{1dgeo}

It was shown in \cite{jun} that for $1< p$ and $1<q<+\infty$ a necessary and sufficient condition for a family of probability measures $\mu_t$ to define a $p$, $q$ sliced Wasserstein geodesic is that for every $\theta\in \mathbb{S}^{d-1}$ that $R_\sharp^\theta \mu_t$ be a geodesics with respect to the $W_p$ metric on $\mathcal{P}_p(\rrr)$. As such, we will begin discussing one-dimensional Wasserstein flows. In particular we will look at flows between uniform measure on $[-1, 1]$ and a convex combination of this measure with a Dirac mass on its support. 

% Find a source about the cdf psedu inverses being the maps on one dimension

Consider for example for $0<\alpha< 1$ and $-1\leq \beta\leq 1$ a geodesic between $\mu^{\alpha, \beta}_0 =\frac{1}{2}\mathcal{L}|_{[-1, 1]}$ and $\mu^{\alpha, \beta}_1 =\frac{1-\alpha}{2}\mathcal{L}|_{[-1, 1]} + \alpha \delta_\beta$ is given by 
%$$\mu_t = \frac{1-\alpha}{2(1 - \alpha(1-t))} \mathcal{L}|_{[-1, 1]} + \frac{t\alpha}{2\alpha(1-t)(1 - \alpha(1-t))}\mathcal{L}|_{[\beta-(1-t)(1+\beta)\alpha, \beta +(1-t)(1-\beta)\alpha]}$$
%$$\mu^{\alpha, \beta}_t = \frac{1}{2(1 - t+t(1-\alpha)^{-1})} \mathcal{L}|_{[-1, 1]} + \frac{-t+t(1-\alpha)^{-1}}{2\alpha(1-t)(1-t+t(1-\alpha)^{-1})}\mathcal{L}|_{[\beta-(1-t)(1+\beta)\alpha, \beta +(1-t)(1-\beta)\alpha]}$$

$$\mu^{\alpha, \beta}_t = \frac{1-\alpha}{2(1-\alpha(1-t))} \mathcal{L}|_{[-1, 1]\setminus B_{\alpha(1-t)}[\beta(1-\alpha(1-t))]} + \frac{1}{2(1-t)} \mathcal{L}|_{B_{\alpha(1-t)}[\beta(1-\alpha(1-t))]}$$

$$\frac{1}{2(1 - t+t(1-\alpha)^{-1})} \mathcal{L}|_{[-1, 1]} + \frac{\alpha(-t+t(1-\alpha)^{-1})}{2\alpha(1-t)(1-t+t(1-\alpha)^{-1})}\mathcal{L}|_{B_{\alpha(1-t)}[\beta(1-\alpha(1-t))]}.$$
%$$ =\frac{1}{2(1-t +t(1-\alpha)^{-1})} \mathcal{L}|_{[-1, 1]\setminus B_{\alpha(1-t)}[\beta(1-\alpha(1-t))]} + \frac{1}{2(1-t)} \mathcal{L}|_{B_{\alpha(1-t)}[\beta(1-\alpha(1-t))]}$$

\begin{figure}[h]
    \centering
    \includegraphics[width=120mm]{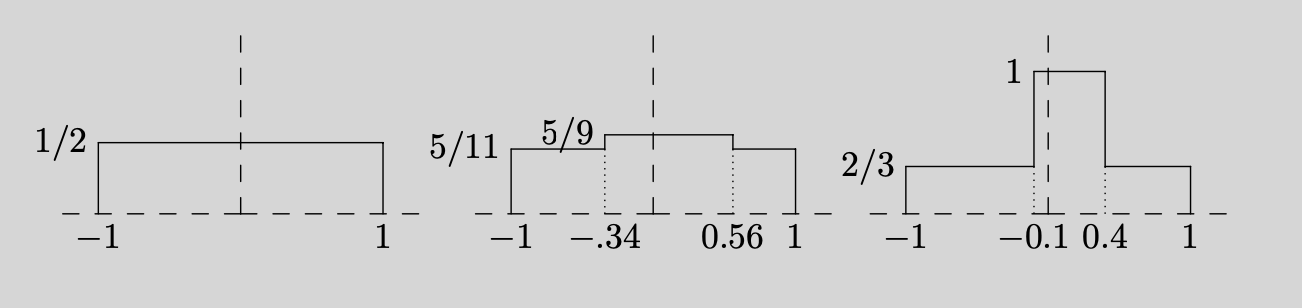}
    \caption{The probability distribution functions for $\mu_{0}^{0.5, 0.2}$, $\mu_{.1}^{0.5, 0.2}$, and $\mu_{.5}^{0.5, 0.2}$ respectively.}
    \label{fig:fig1}
\end{figure}

Where $B_r[x]$ is the open ball of radius $r$ around $x$ and $\mathcal{L}|_I$ is Lebesgue measure restricted to the set $I$. See Figure \ref{fig:fig1} for a visualization since although the notation is cumbersome the images are relatively simple. The first formulation will be used to show these are indeed one dimensional geodesics and the second will be more natural when we wish to write these as projected measures. To see these are 1 dimensional geodesics we will use the following standard facts and notation about 1 dimensional transport problems.

\begin{notn}
   (1) The cumulative distribution function (cdf) $F_\mu(t) := \mu((-\infty, t))$.\\
    (2) The `generalized inverse' of the cdf is $F_\mu^\circ(s) : = \sup\{x\in \rrr: F_\mu(x)\leq s\}$ for $s\in [0, 1]$.

\end{notn}

\begin{thm}\label{geocon} (Theorem 6.0.2 in \cite{grad}) Let $\mu$, $\nu$ be in $\mathcal{P}_p(\rrr)$ and let $c(x, y) = |x-y|^p$ which is convex, non-negative, and has p-growth.
If $\mu$ has no atoms, then $F_\nu^\circ \circ F_\mu$ is an optimal transport map from $\mu$ to $\nu$ for $1\leq p< +\infty$, and it is unique when $1<p<+\infty$.
    
\end{thm}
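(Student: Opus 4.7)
The plan is to decompose the claim into three pieces: verifying that $T := F_\nu^\circ \circ F_\mu$ actually pushes $\mu$ forward to $\nu$, showing that the coupling $(\mathrm{id}, T)_\sharp \mu$ minimizes the $L^p$ cost, and upgrading to uniqueness under strict convexity. Throughout, the essential tool is the one-dimensional rearrangement inequality for convex costs, together with the fact that any pair of the form $(F_\mu, F_\nu^\circ \circ F_\mu)$ is automatically comonotone.

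For the push-forward claim, I would use the hypothesis that $\mu$ has no atoms, which makes $F_\mu : \rrr \to [0,1]$ continuous. A direct computation at the level of cumulative distribution functions shows $\mu(\{x : T(x) \leq y\}) = \mu(\{x : F_\mu(x) \leq F_\nu(y)\}) = F_\nu(y)$, using the standard identities relating $F_\nu^\circ$ and $F_\nu$ on a set of full Lebesgue measure in $[0,1]$, together with the fact that the law of $F_\mu$ under $\mu$ is uniform on $[0,1]$ precisely because $\mu$ is atomless.

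The heart of the argument is the optimality of this monotone rearrangement. Since $F_\mu$ and $F_\nu^\circ$ are both non-decreasing, so is $T$, and the support of $\pi_T := (\mathrm{id}, T)_\sharp \mu$ is therefore totally monotone: if $(x_1, y_1)$ and $(x_2, y_2)$ lie in $\mathrm{supp}\,\pi_T$ with $x_1 < x_2$, then $y_1 \leq y_2$. I would then invoke the classical rearrangement inequality, which states that for any convex $h : \rrr \to \rrr$ one has $h(x_1 - y_1) + h(x_2 - y_2) \leq h(x_1 - y_2) + h(x_2 - y_1)$ whenever $x_1 \leq x_2$ and $y_1 \leq y_2$. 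Applied to $h(z) = |z|^p$ this yields $c$-cyclical monotonicity of $\mathrm{supp}\,\pi_T$, which in turn implies optimality of $\pi_T$ by Kantorovich duality (equivalently, by the existence of a $c$-concave potential along the monotone rearrangement).

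For the uniqueness step when $1 < p < +\infty$, I would exploit the strict convexity of $z \mapsto |z|^p$: the rearrangement inequality becomes strict whenever $x_1 < x_2$ and $y_1 > y_2$. Any optimal coupling must therefore be supported in a totally monotone set, and once $\mu$ is atomless such a coupling is uniquely determined by the common quantile parametrization and hence equals $\pi_T$. The main obstacle I expect is not any individual step but rather wiring the pointwise rearrangement inequality into a rigorous optimality statement at the level of probability measures; passing from the finite ``swap two points'' argument to an integrated statement on all admissible couplings requires either Kantorovich duality or a careful approximation by discrete measures, and this is the bookkeeping where most of the effort lies.
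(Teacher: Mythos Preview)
The paper does not supply a proof of this theorem at all: it is quoted verbatim as Theorem 6.0.2 from Ambrosio--Gigli--Savar\'e and used as a black box to identify the one-dimensional optimal map. So there is no ``paper's own proof'' to compare your proposal against.

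That said, your outline is the standard one and is essentially correct. The three-step decomposition (push-forward via the probability integral transform, optimality from $c$-cyclical monotonicity of a monotone graph, uniqueness from strict convexity forcing every optimal plan onto a monotone graph) is exactly how the result is established in the literature. One small caution: in the uniqueness step you should be explicit that a monotone set in $\rrr^2$ whose first marginal is atomless is the graph of a function determined $\mu$-a.e., which is what pins down the coupling; you gesture at this with ``common quantile parametrization'' but it is the place where atomlessness of $\mu$ is used a second time.
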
 

If $\mu^{\alpha, \beta}_0=\mu = \frac{1}{2}\mathcal{L}_{[-1, 1]}$ and $\nu =\mu^{\alpha, \beta}_1= \frac{\alpha}{2}\mathcal{L}_{[-1, 1]} + (1-\alpha)\delta_\beta $, then $\mu$ has no atoms and $$F_\mu(s) = \begin{cases}
0  & s<-1\\
\frac{1}{2}(s+1) & -1\leq s\leq 1\\
1 & s>1
\end{cases},$$
$$F_\nu(s) = \begin{cases}
0& s<-1\\
\frac{1-\alpha}{2}(s+1) & -1\leq s<\beta\\
\frac{1-\alpha}{2}(s+1) +\alpha & \beta\leq s\leq 1\\
1 & s>1
\end{cases}$$
and, $$F^\circ_\nu(s) = \begin{cases}
    -1+ \frac{2}{1-\alpha} s & s< \frac{(1+\beta)(1-\alpha)}{2}\\
    \beta & \frac{(1+\beta)(1-\alpha)}{2}\leq s \leq \frac{(1+\beta)(1-\alpha)}{2}+\alpha\\
    -1 + \frac{2}{1-\alpha}(s-\alpha) & s>\frac{(1+\beta)(1-\alpha)}{2}+\alpha
\end{cases}.$$
We can put these together to get the transport map $$F^\circ_\nu\circ F_\mu (s) = \begin{cases} -1& s< -1\\
-1 +\frac{s+1}{1-\alpha} & -1\leq s<(1+\beta)(1-\alpha)-1\\
\beta & (1+\beta)(1-\alpha)-1\leq s< (1+\beta)(1-\alpha)-1+2\alpha\\
-1 +\frac{s+1-2\alpha}{1-\alpha} &(1+\beta)(1-\alpha)-1+2\alpha\leq s\leq 1\\
1 & s>1
\end{cases}.$$
We can then check that $$\lambda F^\circ_\nu\circ F_\mu  + (1-\lambda)\id = \begin{cases}
-\lambda +(1-\lambda)s& s< -1\\
\lambda\frac{\alpha}{1-\alpha} + s(1+\lambda(\frac{1}{1-\alpha}-1)) & -1\leq s<\beta -\alpha(1+\beta)\\
\lambda\beta + (1-\lambda)s & \beta -\alpha(1+\beta)\leq s< \beta +\alpha(1-\beta)\\
\lambda\frac{-\alpha}{1-\alpha} + s(1+\lambda(\frac{1}{1-\alpha}-1)) &\beta +\alpha(1-\beta)\leq s\leq 1\\
\lambda +(1-\lambda)s & s>1
\end{cases}.$$

Where $\id$ is the identity map. Note we simplified the conditions on $s$ and also collected the terms in $s$ to it is more clear what the push forward is of uniform measure, $$\mu^{\alpha, \beta}_0 = (\lambda F^\circ_\nu\circ F_\mu  + (1-\lambda)\id)_\sharp \mu^{\alpha, \beta}_0 = \mu^{\alpha, \beta}_\lambda$$ (replace the $\lambda$ with a $t$ and take the reciprocal of the term in front of $s$, the factor of $1/2$ comes from the fact we are pushing forward Lebesgue measure weighed by $1/2$). We can check locations of the jumps in the pdf of $\mu_\lambda^{\alpha, \beta}$ by looking at the images of the points where the piece-wise definition changes which are $\lambda\beta + (1-\lambda)(\beta-\alpha(1+\beta)) = \beta -\alpha(1-\lambda)(1+\beta)$ and $\lambda\beta +(1-\lambda)(\beta +\alpha(1-\beta)) = \beta +\alpha(1-\lambda)(1-\beta)$ which are the boundary points of $B_{\alpha(1-\lambda)}(\beta-\alpha\beta(1-\lambda))$.

Since each $\mu^{\alpha, \beta}_t$ is given by the push forward of a convex combination of the optimal transport map and the identity we can see that these are indeed 1 dimensional Wasserstein geodesics by theorem 7.2.2 in \cite{grad}. Furthermore, we know they are constant speed geodesics and so we can note that $W_p(\mu^{\alpha, \beta}_t, \mu^{\alpha, \beta}_s) = |t-s|W_p(\mu^{\alpha, \beta}_0, \mu^{\alpha, \beta}_1)$ which we can compute using the transport map:

$$W_p^p(\mu^{\alpha, \beta}_0, \mu^{\alpha, \beta}_1) = $$ $$\int_{-1}^{\beta-\alpha(1-\beta)} \Big|\frac{\alpha(1+s)}{1-\alpha}\Big|^p \frac{ds}{2} + \int_{\beta-\alpha(1-\beta)}^{\beta+\alpha(1+\beta)} \Big|\beta-s\Big|^p\frac{ds}{2} + \int_{\beta+\alpha(1+\beta)}^1 \Big|\frac{\alpha(s-1)}{1-\alpha}\Big|^p \frac{ds}{2} $$

$$= \frac{1}{2(p+1)}\Big[\frac{\alpha^p}{(1-\alpha)^p}\Big(|1+\beta-\alpha(1-\beta)|^{p+1}+|\beta+\alpha(1+\beta)-1|^{p+1}\Big)\Big]$$
$$+\frac{1}{2(p+1)}\Big[(\alpha(1+\beta))^{p+1} + (\alpha(1-\beta))^{p+1}\Big].$$

Note that when $\beta=0$ this formula simplifies greatly to $\frac{1}{p+1}\alpha^p$, thus we have $W_p(\mu^{\alpha, 0}_t, \mu^{\alpha, 0}_s) = \frac{\alpha}{(p+1)^{1/p}}|t-s|$ for $1\leq p<+\infty$, we can next use the fact that these measures are compactly supported and that $W_\infty (\mu, \nu) = \lim_{p\rightarrow +\infty} W_p(\mu, \nu)$ to see that $W_\infty(\mu^{\alpha, 0}_t,\mu^{\alpha, 0}_s) = \alpha|s-t|$ and so it is geodesic for $1\leq p\leq +\infty$. We now have sufficiently computed the one dimensional Wasserstein flows that we will reference when discussion sliced Wassertien geodesics.
%find some sorce about Wasserstein geodesics in one dimension to argue why they are geodesicss, maybe not that opptimal trasport maps that are increasing are automatically give these (though not unqiuely).

%We can note that we can measure for $0\leq s<t<1$, $(W_p(\mu_s, \mu_t))^p = \int_0^$

\section{Sliced Wasserstein Geodesics} \label{swgeo}

The phenomena that we will be studying occurs naturally in three dimensions, but by considering embedding of $\rrr^3$ in higher dimensional spaces we get similar effects. Let $\rrr^d$ have an orthonormal basis $e_1, e_2, \cdots e_d$. We will use the following notation for surface (ie 2-dimensional) measure restricted to a spherical shell in a 3 dimensional subspace as well as some simple maps we will want to pushforward with.

\begin{notn}
    (1) For $d\geq 3$, $r>0$ and $x\in \rrr^d$ define $$\sigma_{r, x} := \frac{\mathcal{H}^2|_{\{ae_1 + be_2 +ce_3 +x: a^2+b^2+c^2=r^2\}}}{\mathcal{H}^2(\{ae_1 + be_2 +ce_3: a^2+b^2+c^2=r^2\})} \hspace{2em} \sigma_{0, x} := \delta_x.$$
    (2) For $a\in \rrr$, define the map $M^a: \rrr^d\rightarrow \rrr^d$ where $M^a(x) = ax$.\\
    (3) Fox $x\in \rrr^d$ define the map $A^x:\rrr^d\rightarrow\rrr^d$ by $A^x(y) = x+y$.
\end{notn}

Calculus and surfaces of revolution will confirm that $$R^\theta_\sharp \sigma_{r, x} = \frac{1}{2r}\mathcal{L}|_{[\theta\cdot x-r(\theta\cdot x-rs(\theta), \theta\cdot x+rs(\theta)]}$$ where 
$s(\theta) = \sqrt{\sum_{i=1}^3 (e_i\cdot \theta)^2}$ that is the norm of $\theta$ projected onto $\textup{span}\{e_1, e_2, e_3\}$. Fix $x\in \overline{B_1(0)}$, we will consider the following family of measures in $\rrr^d$, $$\nu^{\alpha, x}_t = \frac{1}{(1-t+t(1-\alpha)^{-1})}\sigma_{1, 0} + \frac{\alpha(-t+ t(1-\alpha)^{-1})}{(1-t+t(1-\alpha)^{-1})}\sigma_{\alpha(1-t), x(1-\alpha(1-t))}.$$

$$\nu^{\alpha, x}_0 = \sigma_{1, 0} \hspace{3em}\nu^{\alpha, x}_1 = (1-\alpha)\sigma_{1, 0} + \alpha \delta_x.$$
What is most important is that from this we get that $R^\theta_\sharp \nu^{\alpha, x}_t = M^{s(\theta)}_\sharp\mu^{\alpha x\cdot \theta}_t$, that is $t\mapsto \nu^{\alpha, x}_t$ is a sliced Wasserstein (extrinsic) geodesic because each projection is a Wasserstein geodesic. Note, this uses the fact that if $\mu_t$ is a geodesic then so is $M^r_\sharp \mu_t$.

\begin{rem}\label{hop}
    Note that $\textup{spt}(\nu_t^{\alpha, 0})\subset \del B_1(0) \cup \del B_{\alpha(1-t)}(0)$ which is disconnected. We can note that the total mass in $\del B_1(0)$ is $1-\alpha t$ and the total mass in $B_{\alpha}(0)$ is $t\alpha$. The fact this is changing in time is an example of the fact that if one attempted to follow the measures on the particle level, they would hop between these connected components and not move continuously. Note that \cite{park} shows this phenomena can occur with absolutely continuous curves in remark 3.9. The example given here is notable since it is the first example of this pathological `hopping' occurring with a curve as nice as a geodesic.
\end{rem}

We should note that we can quickly grow this family of Sliced Wasserstein geodesics by adding dilation and translations. It is a fact that if $t\mapsto \mu_t$ is a Wasserstein geodesic in $\rrr$, then $t\mapsto M^a_\sharp(\mu_t)$ and $t\mapsto A^{ty+z}_\sharp(\mu_t)$ are also geodesics for $a\in \rrr$ and $z, y\in \rrr$. As such, the restriction of around the origin and having unit radius are simply to reduce notation. Thus for all $\alpha\in (0, 1)$, $a\in \rrr$, $x\in \overline{B_1(0)}$ and $y, z\in \rrr^d$, we can note that $t\mapsto M^a_\sharp(A^{ty +z}_\sharp (\nu^{\alpha, x}_t))$ are $SW_{p, q}$ geodesics. Note that the edge cases when $\alpha =0$ and $\alpha=1$ are also Sliced Wasserstein geodesics, they are translations when $\alpha=0$ and pure dilation when $\alpha=1$, these are also Wasserstein geodesics and as such were of less interest to the author. Thus, we have found more than one interesting Sliced Wasserstein geodesic, we have actually found a five parameter family, showing that the dimension of the space of sliced Wasserstein geodesics which are not Wasserstein geodesics is at least $3d+2$ for any particular embedding of $\rrr^3$ into $\rrr^d$. 

The barycenter and gradient flow problems were referenced in the abstract. Note that geodesics are a simple case of both of these (eg finding the barycenter of measures along the geodesic and gradient flow of the distance functional). These examples demonstrate different behavior of the Wasserstein and Sliced Wasserstein metrics. One difference is that these geodesics show that the Sliced Wasserstein see movement to the `interior' of a shell as closer than the Wasserstein metric. This means, at least in some cases, the Sliced Wasserstein metric will lose mass on the periphery faster along gradients and barycentric flows. Furthermore, these flows no longer correspond to continuous movement on the particle level, which is often said to make Wasserstein flows `intuitive'. These qualitative differences in the flows should be kept in mind when opting to replace a Wasserstein metric with a sliced Wasserstein metric. A potential area of future research could be to quantify these differences.

\section{Non-equivalence of metrics} \label{bilip}

We will now use these geodesics to prove the following  version of Main Theorem (3) from \cite{jun} which was a strengthening of Theorem 2.1(iii) and correction of Theorem 2.1(ii) in \cite{guo}. Note that there are some cases, particularly when $p=1$ that are shown in \cite{guo} and \cite{jun} that are not shown here so this is not a strict strengthening, but we do cover all the remaining cases. This is the first paper as far as the author is aware to address the case when $p=+\infty$ so in addition cases left out by \cite{guo} and \cite{jun} this edge case has finally been addressed. Attention should be drawn to the proof method which is distinct from the previous papers. Instead of using a probabilistic method which only proved examples exist this paper provides direct proof by constructing specific examples. Additionally, the examples given are all uniformly bounded support, something done in \cite{jun} for only $p\geq 2$ and finite $q$, showing that bi-Lipschitz equivalence cannot be recovered by restricting to measures restricted to an arbitrary compact set.

\begin{thm} \label{iff}
    For $d=2$, $1\leq p\leq +\infty$ and $1\leq q< +\infty$, $p=1$ and  $q=+\infty$, or for $d\geq 3$ $1\leq p, q\leq +\infty$, $(\mathcal{P}_p(\rrr^d), SW_{p, q})$ and $(\mathcal{P}_p(\rrr^d), W_{p})$ are not bi-Lipschitz equivalent.
\end{thm}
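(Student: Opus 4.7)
The plan is to deploy the sliced Wasserstein geodesics $\nu^{\alpha,0}_t$ constructed in Section \ref{swgeo} as explicit counterexamples in the novel range $d \geq 3$, $1 < p \leq \infty$, $1 \leq q \leq \infty$. The remaining subcases---the full $d = 2$ statement and the $p = 1$ subcase for $d \geq 3$---fall outside the scope of this construction and are quoted from \cite{guo} and \cite{jun}.

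Fix $\alpha \in (0, 1)$. Since $t \mapsto \nu^{\alpha, 0}_t$ is a constant-speed $SW_{p,q}$-geodesic on $[0, 1]$, one has $SW_{p, q}(\nu^{\alpha, 0}_t, \nu^{\alpha, 0}_s) = C|s - t|$ for a finite $C = C_{p, q, \alpha}$. The core of the proof is then a lower bound on $W_p$ that grows strictly slower than linearly in $|s - t|$. By Remark \ref{hop}, the support of $\nu^{\alpha, 0}_r$ decomposes into the two disjoint spheres $\partial B_1(0)$ (mass $1 - \alpha r$) and $\partial B_{\alpha(1 - r)}(0)$ (mass $\alpha r$). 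For $0 \leq t < s \leq 1$ and any coupling $\pi$ between $\nu^{\alpha, 0}_t$ and $\nu^{\alpha, 0}_s$, I would split the $\pi$-mass $m_{ij}$ according to whether each endpoint lies on the outer or inner sphere. The two marginal constraints then force $m_{oi} - m_{io} = \alpha(s - t)$, hence $m_{oi} \geq \alpha(s - t)$. Since $\partial B_1(0)$ and $\partial B_{\alpha(1 - s)}(0)$ are separated by at least $1 - \alpha$, this yields
$$W_p^p(\nu^{\alpha, 0}_t, \nu^{\alpha, 0}_s) \geq \alpha(s - t)(1 - \alpha)^p \quad \text{for } 1 \leq p < \infty,$$
and $W_\infty(\nu^{\alpha, 0}_t, \nu^{\alpha, 0}_s) \geq 1 - \alpha$ whenever $s > t$.

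A bi-Lipschitz bound $W_p \leq K \cdot SW_{p, q}$ would therefore require $\alpha^{1/p}(1 - \alpha) \leq KC \, |s - t|^{1 - 1/p}$ for finite $p$, which fails as $|s - t| \to 0^+$ whenever $p > 1$; and for $p = \infty$ it would require $1 - \alpha \leq KC |s - t|$, likewise failing. Either way, the two metrics cannot be bi-Lipschitz equivalent in the novel range.

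I expect the main obstacle to be the rigorous bookkeeping in the $W_p$ lower bound: verifying that the marginal argument really does force a mass of at least $\alpha(s - t)$ across the fixed gap of width $\geq 1 - \alpha$, regardless of how the transport plan is routed. Once this is in place, the asymptotic mismatch with the linear $SW_{p, q}$-growth along the geodesic delivers the non-equivalence immediately, and the uniform boundedness of the supports of $\nu^{\alpha, 0}_t$ simultaneously shows that the failure is not rescued by restricting to measures on a fixed compact set.
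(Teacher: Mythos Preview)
Your argument for the range $d \geq 3$, $1 < p \leq \infty$ is correct and takes a somewhat different route from the paper. The paper (Proposition~\ref{main}) computes $W_p(\nu^{\alpha,0}_t,\nu^{\alpha,0}_0)$ \emph{exactly} by exhibiting the optimal map $T(x)=x/|x|$ and checking $c$-cyclical monotonicity, obtaining $W_p^p = \alpha t(1-\alpha(1-t))^{p-1}$. You instead extract a \emph{lower bound} via the mass-balance argument $m_{oi}-m_{io}=\alpha(s-t)$, which forces at least $\alpha(s-t)$ units of mass across the gap of width $\geq 1-\alpha$. Your approach is more elementary (no need to verify optimality of a candidate map) and works uniformly for all pairs $t<s$ rather than just $t=0$; the paper's approach yields the sharp constant and the precise H\"older exponent $1/p$ of the curve in $W_p$. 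Either delivers the blow-up of the ratio $W_p/SW_{p,q}$ for $p>1$.

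There is, however, a genuine gap in your reduction of the remaining cases. You write that ``the full $d=2$ statement'' is quoted from \cite{guo} and \cite{jun}. This is not so: the subcase $d=2$, $p=+\infty$, $1\leq q<+\infty$ is \emph{not} covered by either reference --- the paper states explicitly that the case $p=+\infty$ had not been addressed previously, and treats it separately in Lemma~\ref{extra} via a different family $\mu_t = t\delta_0 + (1-t)\tfrac{1}{2\pi}\mathcal{H}^1|_{\partial B_1(0)}$ in $\rrr^2$, computing the one-dimensional $W_\infty$ of the arcsine-type projections directly. Your $\nu^{\alpha,0}_t$ construction lives on $2$-spheres and requires $d\geq 3$, so it cannot be reused here; you need either to reproduce the Lemma~\ref{extra} argument or to supply an alternative two-dimensional example for $p=\infty$. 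Without this, the theorem is not fully proved.
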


\begin{prop}\label{main}
    For all $1< p\leq +\infty$ and $1\leq q\leq +\infty$ and $d\geq 3$, $d\in \nnn$, the spaces $(\mathcal{P}_p(\rrr^d), W_p)$ and $(\mathcal{P}_p(\rrr^d), SW_{p, q})$ are not bi-Lipschitz equivalent nor are they equivalent along geodesics in $(\mathcal{P}_p(\rrr^d), SW_{p, q})$.
\end{prop}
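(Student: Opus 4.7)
The plan is to use the family of sliced Wasserstein geodesics $t\mapsto \nu_t^{\alpha,0}$ constructed in Section \ref{swgeo} as a single witness that simultaneously refutes bi-Lipschitz equivalence and equivalence along $SW_{p,q}$-geodesics. Fix any $\alpha\in(0,1)$ and take $x=0$, so that $\nu_t^{\alpha,0}$ has support in $\del B_1(0)\cup \del B_{\alpha(1-t)}(0)$, with mass $1-\alpha t$ on the outer shell and mass $\alpha t$ on the inner shell (cf.\ Remark \ref{hop}). The strategy is to compute the $SW_{p,q}$-distance along this curve exactly, bound the $W_p$-distance from below by a transport-cost argument exploiting the gap between the two shells, and observe that the ratio of the two blows up as $t\to 0^+$.

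First I would compute $SW_{p,q}(\nu_0^{\alpha,0},\nu_t^{\alpha,0})$. Because $x\cdot\theta=0$ for every $\theta$, the projection identity $R^\theta_\sharp \nu_t^{\alpha,0}=M^{s(\theta)}_\sharp \mu_t^{\alpha,0}$ combined with the one-dimensional constant-speed computation (the special case $\beta=0$ in Section \ref{1dgeo}) gives
$$W_p(R^\theta_\sharp \nu_0^{\alpha,0},R^\theta_\sharp \nu_t^{\alpha,0})=\frac{s(\theta)\,\alpha t}{(p+1)^{1/p}}\quad(1<p<\infty),\qquad W_\infty(\cdot)=s(\theta)\,\alpha t.$$
Integrating in $\theta$ (or taking the supremum if $q=\infty$) yields $SW_{p,q}(\nu_0^{\alpha,0},\nu_t^{\alpha,0})=C_{p,q,d}\,\alpha t$ for a positive constant $C_{p,q,d}$ depending only on the indicated parameters; in particular the $SW_{p,q}$-distance is linear in $t$.

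Next I would bound $W_p(\nu_0^{\alpha,0},\nu_t^{\alpha,0})$ from below. Since $\textup{spt}(\nu_0^{\alpha,0})=\del B_1(0)$ while $\nu_t^{\alpha,0}$ places mass exactly $\alpha t$ on $\del B_{\alpha(1-t)}(0)$, every transport plan must move a total mass of at least $\alpha t$ a distance of at least $1-\alpha(1-t)\geq 1-\alpha$. For $1<p<\infty$ this gives
$$W_p(\nu_0^{\alpha,0},\nu_t^{\alpha,0})\geq (1-\alpha)(\alpha t)^{1/p},$$
while for $p=\infty$ the essential supremum of $|x-y|$ under any coupling is at least $1-\alpha$, so $W_\infty(\nu_0^{\alpha,0},\nu_t^{\alpha,0})\geq 1-\alpha$. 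Combining with the $SW_{p,q}$ computation, the ratio
$$\frac{W_p(\nu_0^{\alpha,0},\nu_t^{\alpha,0})}{SW_{p,q}(\nu_0^{\alpha,0},\nu_t^{\alpha,0})}\gtrsim t^{1/p-1}\quad(1<p<\infty)\quad\text{or}\quad \gtrsim t^{-1}\quad(p=\infty)$$
tends to $+\infty$ as $t\to 0^+$. This rules out bi-Lipschitz equivalence, and since the entire curve $\{\nu_t^{\alpha,0}\}_{t\in[0,1]}$ is itself an $SW_{p,q}$-geodesic, it simultaneously rules out equivalence along $SW_{p,q}$-geodesics.

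The only real subtlety is the lower bound on $W_p$, and it is quite mild: everything reduces to the observation that the two connected components of $\textup{spt}(\nu_t^{\alpha,0})$ are separated by a distance bounded below by $1-\alpha$ uniformly in $t$, so a quantitative transport lower bound is forced by the amount of mass $\alpha t$ that must cross the gap. The main obstacle, if any, is bookkeeping for the edge cases $p=\infty$ and $q=\infty$ to verify that the constants $C_{p,q,d}$ are strictly positive; this follows from $s(\theta)$ being positive on a set of positive $\mathcal{H}^{d-1}$-measure (indeed, $s(\theta)=1$ whenever $\theta\in \textup{span}\{e_1,e_2,e_3\}\cap \mathbb{S}^{d-1}$) and requires $d\geq 3$ precisely so that this span has a nontrivial intersection with the unit sphere of the ambient space. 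No other hypothesis is used, so the argument covers the full range stated in the proposition.
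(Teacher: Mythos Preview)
Your proposal is correct and follows essentially the same route as the paper: use the geodesic $t\mapsto\nu_t^{\alpha,0}$, compute $SW_{p,q}(\nu_0^{\alpha,0},\nu_t^{\alpha,0})\sim \alpha t$ exactly via the one-dimensional projections, compare with $W_p$, and let $t\to 0^+$. The only difference is in how $W_p$ is handled: the paper identifies the radial projection $T(x)=x/|x|$ as the optimal map (via $c$-monotonicity) and obtains the exact value $W_p^p(\nu_0^{\alpha,0},\nu_t^{\alpha,0})=\alpha t\,(1-\alpha(1-t))^{p-1}$, whereas you use the cruder but sufficient lower bound $W_p\geq(1-\alpha)(\alpha t)^{1/p}$ coming from the mass $\propto t$ that must cross the gap between the shells. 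Both give the same $t^{1/p-1}$ blow-up, and your version has the advantage of avoiding any optimality verification; the paper's version gives the exact constant and thus the precise H\"older exponent of the curve in $W_p$. One small quibble: your justification of $C_{p,q,d}>0$ via ``$s(\theta)=1$ on $\textup{span}\{e_1,e_2,e_3\}\cap\mathbb{S}^{d-1}$'' only handles $q=\infty$, since for $d>3$ that set has $\mathcal{H}^{d-1}$-measure zero; for finite $q$ the correct (and equally easy) reason is that $s(\theta)>0$ $\mathcal{H}^{d-1}$-a.e.
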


\begin{proof}[Proof of Proposition \ref{main}]

Consider the family of measures $\nu_{t}^{\alpha, 0}$ for $t>0$ and $\nu_0^{\alpha, 0}$. We have shown above that 
$$SW_{p, q}(\nu_{t}^{\alpha, 0}, \nu_{0}^{\alpha, 0}) = \left[\frac{1}{\mathcal{H}^{d-1}(\mathbb{S}^{d-1})}\int_{\mathbb{S}^{d-1}} \Big(W_p(R_\sharp^\theta \nu_{t}^{\alpha, 0}, R_\sharp^\theta \nu_0^{\alpha, 0})\Big)^qd\mathcal{H}^{d-1}(\theta) \right ]^{1/q}$$
$$= \left[\frac{1}{\mathcal{H}^{d-1}(\mathbb{S}^{d-1})}\int_{\mathbb{S}^{d-1}} \Big(W_p(M^{s(\theta)}_\sharp \mu_{t}^{\alpha, 0}, M^{s(\theta)}_\sharp \mu_0^{\alpha, 0})\Big)^qd\mathcal{H}^{d-1}(\theta) \right ]^{1/q}$$
$$ = \left[\frac{1}{\mathcal{H}^{d-1}(\mathbb{S}^{d-1})}\int_{\mathbb{S}^{d-1}} \Big(s(\theta) t \frac{\alpha}{(1+p)^{1/p}}\Big)^qd\mathcal{H}^{d-1}(\theta) \right]^{1/q} $$
$$=  \frac{\alpha t}{(p+1)^{1/p}}\left[\frac{1}{\mathcal{H}^{d-1}(\mathbb{S}^{d-1})}\int_{\mathbb{S}^{d-1}} \Big(s(\theta)\Big)^qd\mathcal{H}^{d-1}(\theta) \right]^{1/q} = \frac{\alpha t}{(p+1)^{1/p}} C_{d, q}.$$
We can note that $C_{d, q} = \left[\frac{1}{\mathcal{H}^{d-1}(\mathbb{S}^{d-1})}\int_{\mathbb{S}^{d-1}} \Big(s(\theta)\Big)^qd\mathcal{H}^{d-1}(\theta) \right]^{1/q}$ is just some dimensional constant depending on $q$, $0\leq s(\theta)\leq 1$ and it is non-zero on a set of positive measure so we can note that $0< C_{d, q} \leq 1$, it is clear when $q=+\infty$ that $C_{d, q} =1$.

On the other hand, we can compute $W_p(\nu_t^{\alpha, 0}, \nu_0^{\alpha, 0})$, the map $T: \rrr^d\rightarrow \rrr^d$ where $T(x) = \frac{x}{|x|}$ for $x\neq 0$ and $T(0)=0$ has c-monotone graph for all $c(x, y) = |x-y|^p$
$1\leq p< +\infty$ and so by 6.1.4 in \cite{grad} we know that this is an optimal transport map from $\nu_t^{\alpha, 0}$ to $\nu_0^{\alpha, 0}$ for $0<t<1$ and since it is for all $1\leq p<+\infty$ we can by taking limits argue it is optimal for $p=+\infty$. We can then compute: 
$$W_p^p(\nu_t^{\alpha, 0}, \nu_0^{\alpha, 0}) = \int |T(x)-x|^p d\nu_t^{\alpha, 0}(x) = \int_{B_1(0)} [1-\alpha(1-t)]^pd\nu_t^{\alpha, 0} $$
$$=\frac{\alpha t}{(1-\alpha(1-t))}[1-\alpha(1-t)]^p = \alpha t (1-\alpha(1-t))^{p-1} .$$

We can then see that $\frac{W_p(\nu_t^{\alpha, 0}, \nu_0^{\alpha, 0})}{SW_{p, q}(\nu_{t}^{\alpha, 0}, \nu_{0}^{\alpha, 0})}= \frac{(1-\alpha(1-t))^{1-1/p}(p+1)^{1/p}}{ C_{d, q}} (\alpha t)^{\frac{1}{p}-1}$. When $1< p< +\infty$ clearly for $p>1$ this expression goes to $+\infty$ as $t\rightarrow 0$ for all fixed $\alpha$ and so we have that $W_p$ and $SW_{p, q}$ are not bi-Lipschitz equivalent. 

The case when $p=+\infty$ we can note that $W_\infty(\nu_t^{\alpha, 0}, \nu_0^{\alpha, 0}) = 1-t(1-\alpha)$ and $SW_{\infty, q}(\nu_t^{\alpha, 0}, \nu_0^{\alpha, 0}) = \alpha t C_{d, q}$, we can see that $\lim_{t\rightarrow 0} \frac{W_\infty(\nu_t^{\alpha, 0}, \nu_0^{\alpha, 0})}{SW_{\infty, q}(\nu_{t}^{\alpha, 0}, \nu_{0}^{\alpha, 0})} = +\infty$.

\end{proof}

Note that it is not known if the exponents here are optimal, we have here that $t\mapsto \nu_{t}^{\alpha, 0}$ are Lipschitz in $SW_{p, q}$ but only $\frac{1}{p}$-Hölder continuous with respect to $W_p$. One corollary of theorem 5.1.5 in \cite{Bon} is that every Lipschitz path (and so every geodesic) in $SW_{p, q}$ in $\rrr^d$ is at least $\frac{1}{p(d+1)}$-Hölder continuous. Thus, as far as the author is aware, it is unclear what is the optimal exponent $\alpha$ such that every Lipschitz path and every geodesic in $SW_{p, q}$ are $\alpha$-Hölder continuous with respect to $W_p$ or if the exponents for Lipschitz paths and geodesics are different.

The addendum about along geodesics in Proposition \ref{main} means that this theorem can also be applied to show facts about the intrinsic metric of $\mathcal{P}^n(\rrr^d)$. There was some hope (for instance \cite{cs}) in the community that the non-equivalence could be fixed by considering the intrinsic metric, that is the metric where the distance between two points is the length of the shortest path between them (see \cite{park} for example for more details). Unfortunately since the intrinsic and extrinsic metrics agree along extrinsic geodesics we get the immediate corollary.

\begin{defn} We define the $SW_{p, q}$ intrinsic metric, $$\ell_{SW_{p, q}} = \inf_{\{\mu_t\}_{t\in [0, 1]} \in AC(\mathcal{P}_p(\rrr^d), SW_{p, q})}\Bigg( \sup_{0=t_0< \dots < t_n=1; n\in \nnn}\sum_{i=0}^{n-1} SW_{p, q}(\mu_{t_i}, \mu_{t_{i+1}})\Bigg).$$
    
\end{defn}

\begin{cor}
    If $d\geq 3$ and $1<p\leq +\infty$ and 
    $1\leq q\leq +\infty$, then $(\mathcal{P}_p(\rrr^d) ,\ell_{SW_{p, q}})$ and 
    $(\mathcal{P}_p(\rrr^d), W_p)$ are not bi-Lipschitz equivalent.%
    
\end{cor}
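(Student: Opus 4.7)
The plan is to reduce the corollary to Proposition~\ref{main} via the elementary observation that the intrinsic and extrinsic metrics agree along extrinsic $SW_{p,q}$-geodesics. The same family $t\mapsto \nu_t^{\alpha,0}$ constructed in Section~\ref{swgeo} and used in the proof of Proposition~\ref{main} will realize the blow-up for $\ell_{SW_{p,q}}$ after this identification is in hand.

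First I would establish the following elementary identity: if $t\mapsto \mu_t$ is a constant-speed $SW_{p,q}$-geodesic on $[0,1]$, then for every $0\le s\le u\le 1$ one has $\ell_{SW_{p,q}}(\mu_s,\mu_u)=SW_{p,q}(\mu_s,\mu_u)$. The inequality $\ell_{SW_{p,q}}\ge SW_{p,q}$ holds because every admissible curve from $\mu_s$ to $\mu_u$ has length at least $SW_{p,q}(\mu_s,\mu_u)$, seen by evaluating the supremum in the definition of $\ell_{SW_{p,q}}$ against the trivial partition $\{s,u\}$. The reverse inequality comes from testing the outer infimum on the restriction of the geodesic to $[s,u]$, which is Lipschitz and hence absolutely continuous in $(\mathcal{P}_p(\rrr^d),SW_{p,q})$: on a constant-speed geodesic every partition sum $\sum_i SW_{p,q}(\mu_{t_i},\mu_{t_{i+1}})=\sum_i(t_{i+1}-t_i)\,SW_{p,q}(\mu_0,\mu_1)$ telescopes to $(u-s)\,SW_{p,q}(\mu_0,\mu_1)=SW_{p,q}(\mu_s,\mu_u)$, so the supremum over partitions of this particular curve is exactly $SW_{p,q}(\mu_s,\mu_u)$ and the infimum over curves is no larger.

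Applying this identity to the geodesic $t\mapsto\nu_t^{\alpha,0}$ from Section~\ref{swgeo} immediately yields $\ell_{SW_{p,q}}(\nu_t^{\alpha,0},\nu_0^{\alpha,0})=SW_{p,q}(\nu_t^{\alpha,0},\nu_0^{\alpha,0})$ for every $t\in(0,1]$. The computations in the proof of Proposition~\ref{main} already show that the ratio $W_p(\nu_t^{\alpha,0},\nu_0^{\alpha,0})/SW_{p,q}(\nu_t^{\alpha,0},\nu_0^{\alpha,0})$ diverges to $+\infty$ as $t\to 0^+$ for all $1<p\le+\infty$, $1\le q\le+\infty$ and $d\ge 3$; substituting $\ell_{SW_{p,q}}$ for $SW_{p,q}$ preserves this divergence, so $W_p$ cannot be bounded above by any constant multiple of $\ell_{SW_{p,q}}$, which precludes bi-Lipschitz equivalence.

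No serious obstacle is anticipated: all of the quantitative work has been done in Proposition~\ref{main}, and the only new ingredient is the identity between intrinsic and extrinsic metrics along a single extrinsic geodesic, which is immediate from the definitions together with the constant-speed property of the geodesic $\nu_t^{\alpha,0}$ verified in Section~\ref{swgeo}.
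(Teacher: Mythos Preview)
Your proposal is correct and follows essentially the same approach as the paper: identify $\ell_{SW_{p,q}}$ with $SW_{p,q}$ along the extrinsic geodesic $t\mapsto\nu_t^{\alpha,0}$ and then import the blow-up of the ratio $W_p/SW_{p,q}$ from Proposition~\ref{main}. The only difference is that you spell out the two-sided inequality justifying $\ell_{SW_{p,q}}=SW_{p,q}$ on geodesics, whereas the paper simply cites this as a standard fact.
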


\begin{proof}

If $t\mapsto \nu_t^{\alpha, 0}$ is an extrinsic geodesic with respect to $SW_{p, q}$, that is $SW_{p, q}(\nu_t^{\alpha, 0}, \nu_s^{\alpha, 0}) = |s-t|SW_{p, q}(\nu_0^{\alpha, 0}, \nu_1^{\alpha, 0})$, then it is a standard fact around these constructions that $\ell_{SW_{p, q}}(\nu_t^{\alpha, 0}, \nu_s^{\alpha, 0}) = SW_{p, q}(\nu_t^{\alpha, 0}, \nu_s^{\alpha, 0})$. Thus, the comparisons done in proposition \ref{bilip}.\ref{main} between $SW_{p, q}$ and $W_p$ are the same as the comparisons between $\ell_{SW_{p, q}}$ and $W_p$, so the latter are not bi-Lipchitiz equivalent.
    
\end{proof}

In the introduction, it was promised that this paper would fill the gaps left by \cite{jun} and \cite{guo}. As of yet this has not fully been done, there remains to show the cases where $d=2$, $p=+\infty$, we will show this case separately. 

\begin{lem} \label{extra}
    For $d=2$ where $p=+\infty$ and $1\leq q\leq +\infty$ or $+\infty >p>1$ and $q=+\infty$ then $(\mathcal{P}_p(\rrr^d), W_p)$ and $(\mathcal{P}_p(\rrr^d), SW_{p, q})$ are not bi-Lipschitz equivalent.
\end{lem}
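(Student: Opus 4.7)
The plan is to adapt the construction used in the proof of Proposition \ref{main} to $\rrr^2$, where the codimension-one spherical shell $\sigma_{1,0}$ must be replaced by the uniform probability measure on a circle. For $\alpha \in (0,1)$, let $\sigma$ denote the uniform probability measure on $\{x \in \rrr^2 : |x|=1\}$ and set $\mu := \sigma$ and $\nu_\alpha := (1-\alpha)\sigma + \alpha\delta_0$. Exactly as in the proof of Proposition \ref{main}, the radial map $T(x) = x/|x|$ (with $T(0)=0$) is $c$-cyclically monotone for every cost $|x-y|^p$, so the optimal transport plan from $\mu$ to $\nu_\alpha$ leaves a $(1-\alpha)$-fraction of $\sigma$ in place and collapses an arc of mass $\alpha$ radially to the origin. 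Every collapsed particle travels distance exactly one, so $W_p(\mu, \nu_\alpha) = \alpha^{1/p}$ for $1 \leq p < \infty$ and $W_\infty(\mu, \nu_\alpha) = 1$.

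The second step is to produce an upper bound on $SW_{p,q}(\mu, \nu_\alpha)$ that is much smaller than $W_p(\mu,\nu_\alpha)$. By rotational invariance of both $\sigma$ and $\delta_0$, for every $\theta \in S^1$ the Radon projection $R^\theta_\sharp \sigma$ is the arcsine distribution $\mathrm{Arc}$ on $[-1,1]$ (with density $1/(\pi\sqrt{1-x^2})$) and $R^\theta_\sharp \nu_\alpha = (1-\alpha)\mathrm{Arc} + \alpha\delta_0$, both independent of $\theta$. Consequently, for every $q \in [1,\infty]$,
\[
SW_{p,q}(\mu, \nu_\alpha) \;=\; W_p\bigl(\mathrm{Arc},\,(1-\alpha)\mathrm{Arc}+\alpha\delta_0\bigr).
\]
Using Theorem \ref{geocon}, the monotone transport map from $\mathrm{Arc}$ to $(1-\alpha)\mathrm{Arc}+\alpha\delta_0$ sends the symmetric central interval $[-\sin(\pi\alpha/2), \sin(\pi\alpha/2)]$ (which carries arcsine-mass exactly $\alpha$) to the origin, and a direct Taylor expansion of its explicit formula shows it differs from the identity by $O(\alpha)$ outside this interval. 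Integrating $|T(x)-x|^p$ then yields $W_p\bigl(\mathrm{Arc},(1-\alpha)\mathrm{Arc}+\alpha\delta_0\bigr) \leq C_p\,\alpha$ for a constant $C_p$ independent of $\alpha$ and $q$, uniformly in $1 \leq p \leq \infty$.

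Combining the two estimates gives
\[
\frac{W_p(\mu, \nu_\alpha)}{SW_{p,q}(\mu, \nu_\alpha)} \;\geq\; C_p^{-1}\,\alpha^{1/p - 1} \ \ (1 < p < \infty), \qquad \frac{W_\infty(\mu,\nu_\alpha)}{SW_{\infty,q}(\mu,\nu_\alpha)} \;\geq\; C_\infty^{-1}\,\alpha^{-1},
\]
both of which diverge as $\alpha \to 0^+$, so the metrics are not bi-Lipschitz equivalent in either case covered by the lemma. The main obstacle is the upper bound in the previous paragraph: the naive split coupling, which sends $(1-\alpha)\mathrm{Arc}$ to itself via the identity and transports the remaining $\alpha\,\mathrm{Arc}$-part to $0$, only gives $W_p \leq C\alpha^{1/p}$, which would leave the ratio in the $q=\infty$, finite-$p$ case bounded. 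The asymptotically sharper $O(\alpha)$ bound therefore requires the genuinely optimal monotone map and a careful expansion near the concentration point, exploiting the fact that its preimage of the atom is an interval of width $2\sin(\pi\alpha/2) = O(\alpha)$.
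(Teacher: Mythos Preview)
Your construction coincides with the paper's: both compare $\sigma$ (uniform on the unit circle) with $(1-\alpha)\sigma+\alpha\delta_0$, use rotational invariance to reduce $SW_{p,q}$ to a single one--dimensional Wasserstein distance against the arcsine law, and let $\alpha\to 0$.

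Two points need repair. First, your justification of $W_p(\mu,\nu_\alpha)=\alpha^{1/p}$ is garbled. The radial map $T(x)=x/|x|$ is the identity on $\textup{spt}(\sigma)$ and fixes $0$, so it does not transport between $\sigma$ and $\nu_\alpha$ in either direction; and ``collapsing an arc of mass $\alpha$ to the origin while leaving the rest in place'' does not yield $(1-\alpha)\sigma$ as the circular part of the target. The correct optimal object is a \emph{plan}, not a map: at each point of $S^1$ send fraction $\alpha$ of the mass to $0$ and keep fraction $1-\alpha$ in place. This has cost exactly $\alpha$, and the matching lower bound is immediate since any admissible plan must move mass $\alpha$ across distance $1$. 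So your value is right, the argument is not.

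Second, the $O(\alpha)$ bound on $W_p\bigl(\mathrm{Arc},(1-\alpha)\mathrm{Arc}+\alpha\delta_0\bigr)$ is correct but ``Taylor expansion'' is too vague. With $F(x)=\tfrac12+\tfrac{1}{\pi}\arcsin x$ and $u=F(x)$, on the right tail one has $|T(x)-x|=\bigl|\cos(\pi u)-\cos\!\bigl(\pi(u-\alpha)/(1-\alpha)\bigr)\bigr|\le \pi\,\alpha\,|1-u|/(1-\alpha)\le \pi\alpha/2$ since $|1-u|\le(1-\alpha)/2$; the left tail is symmetric, and on the central interval $|T(x)-x|=|x|\le\sin(\pi\alpha/2)\le\pi\alpha/2$. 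Hence $W_p\le \pi\alpha/2$ uniformly in $p\in[1,\infty]$, and your ratio estimates follow.

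It is worth noting that the paper's own proof of this lemma treats only the case $p=\infty$; the paper explicitly records the case $d=2$, $1<p<\infty$, $q=\infty$ as still open at the end of Section~\ref{bilip}. Once the two details above are corrected, your argument actually covers that case as well and thus goes beyond what the paper establishes.
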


\begin{proof}
First for $p =+\infty$ consider the family of measures $\mu_t = t\delta_0 + (1-t)\frac{\mathcal{H}^1|_{\del(B_1(0)}}{2\pi}$. Note that $W_\infty(\mu_0, \mu_t) =1$ for all 
$0<t\leq 1$. Furthermore, note that $R^\theta_\sharp \frac{\mathcal{H}^1|_{\del(B_1(0)}}{2\pi}$ has the distribution function
$\frac{1}{\pi \sqrt{1-x^2}}$ call this measure $\nu$. We can then note that the optimal transport map, $T$ between $t\delta_0 +(1-t)\nu$ and $\nu$ (that is between $R^\theta_\sharp \mu_t$ and $R^\theta_\sharp \mu_0$) is monotone. 
Note that while $\int_{-s}^{s}\frac{1}{\pi \sqrt{1-x^2}} <t$, we can note that $T(s)=0$ and so $|T(s)-s|$ is increasing ($s>0$), but when $\int_{-s}^{s}\frac{1}{\pi \sqrt{1-x^2}} >t$ $s\mapsto |T(s)-s|$ is decreasing ($s>0$). Thus, we can find $W_\infty(R^\theta_\sharp \mu_t, R^\theta_\sharp \mu_0)$ to be this maximum which is at $s= \frac{2\sin(t)}{\pi}$. We can then note that due to the spherical symmetry of $\mu_t$ we know that $SW_{\infty, q}(\mu_t, \mu_0) =W_\infty(R^\theta_\sharp \mu_t, R^\theta_\sharp \mu_0) =\frac{2\sin(t)}{\pi}$ (this is regardless of the choice of $q$ due to the spherical symmetry). Thus, we can note that $\lim_{t\rightarrow 0} \frac{W_\infty(\mu_t, \mu_0)}{SW_{\infty, q}(\mu_t, \mu_0)} =+\infty$, and so we have shown these are not bi-Lipschitz equivalent.

\end{proof}

\begin{proof}[Proof of Theorem \ref{iff}]
Proposition \ref{main} covers $d\geq 3$ and $p\neq 1$ and lemma \ref{extra} covers $d=2$ and $p=+\infty$. We can then note that \cite{jun} Main theorem (3) handles $d\geq 3$ $p=1$ and any $q$ as well as $d=2$, $1< p< +\infty $ and $1\leq q<+\infty$ as well as $p=1$ with $q=+\infty$.
    
\end{proof}

We should note that at this point, the math community almost has an if and only if characterization of the equivalence of these metrics, the only remaining cases are $d=2$ with $1<p<+\infty$ and $q=+\infty$.
%\section{Disclosure statement}

%The author does not work for, consult, own shares in or receive funding from any
%company or organization that would benefit from this article, and have disclosed
%no relevant affiliations beyond their academic appointment.

\section{Acknowledgements}
I am greatly indebted to Wilfrid Gangbo who taught me optimal transport theory, guided me to sliced Wasserstein metrics, and advised me during the process of writing and research. I would also like to that Jun Kitagawa for fruitful discussion particularly about how to get results in higher dimensions. This work was partially supported by the Air Force Office of Scientific Research under Award No. FA9550-18-1-0502.

% The next command determines the bibliography style. Please do not
% change this.
%\bibliographystyle{crplain}
%  This inserts the bib file
%\bibliography{realbib}

\end{document}